\newtheorem{thm}{Theorem}
\newtheorem{lemma}{Lemma}
\newtheorem{prop}{Proposition}
\newtheorem{lemmax}{Lemma}
\numberwithin{equation}{section}
\begin{document}

\title{A characterization of the inclusions between mixed norm spaces}

\author{Irina Ar\'evalo}
\address{Departamento de Matem\'aticas, Universidad Aut\'onoma de Madrid, 28049 Madrid, Spain}
\email{irina.arevalo@uam.es}


\subjclass[2010]{Primary 46E10; Secondary 30H05, 30H20}
\date{\today}


\keywords{Mixed norm spaces, inclusion between spaces, pointwise growth}

\begin{abstract}
We consider the mixed norm spaces of Hardy type studied by Flett and others. 
We study some properties of these spaces related to mean and pointwise growth and complement some partial results by various authors by giving a complete characterization of the inclusion between $H(p,q,\alpha)$ and $H(u,v,\beta),$ depending on the parameters $p, q, \alpha, u, v$ and $\beta.$ 
\end{abstract}
\maketitle

\section{Introduction}

For $p,q,\alpha>0,$ an analytic function on the unit disk $f$ is said to belong to the mixed norm space $H(p,q,\alpha)$ if and only if $$\|f\|_{p,q,\alpha}^q=\alpha q\int_0^1(1-r)^{\alpha q-1}\left(\int_0^{2\pi}|f(re^{i\theta})|^p\,\frac{d\theta}{2\pi}\right)^{q/p}\,dr<\infty.$$

This expression first appears in Hardy and Littlewood's paper on properties of the integral mean \cite{HL}, but the mixed norm space was not explicitly defined until Flett's works \cite{Flett}, \cite{Flett2}. Since then, these spaces have been studied by many authors (see \cite{aj}, \cite{Blasco}, \cite{BKV}, \cite{Gab}, \cite{Sledd}). Recently, the mixed norm spaces are mentioned in the works \cite{av}, \cite{avetisyan}, which are closely related to the main topic in this paper, see also the forthcoming monograph \cite{JVA}.

The mixed norm spaces form a family of complete spaces that contains the Hardy and Bergman spaces. In the references given, many properties of these spaces have been studied, such as pointwise growth (that will appear later in this work), duality, relation with coefficient multipliers and partial results on inclusions, but, to the best of our knowledge, a complete characterization of the inclusions between different spaces $H(p,q,\alpha)$ has not been recorded. 

In this paper we complete the table of inclusions between different mixed norm spaces by finding a bound for the norm of the inclusion operator whenever an inclusion holds, and by giving explicit examples of functions
to show that no inclusion takes place in some cases. For that, we prove some preliminary results, of interest by themselves, on mean and pointwise growth, norm of the point-evaluation functional and rate of decrease of the integral means. 


 

From now on, we will understand $1/\infty$ as zero, the letters $A, B, C, C', K, m$ will be positive constants, and we will say that two quantities are comparable, denoted by $\alpha\approx\beta,$ if there exist two positive constants $C$ and $C'$ such that $$C\alpha\leq\beta\leq C'\alpha.$$

The present work was partially supported by MINECO grant MTM2012-37436-C02-02, Spain and is part of the author's doctoral thesis work at Universidad Aut\'onoma de Madrid under the supervision of Professor Dragan Vukoti\'c. She wants to thank him for his useful comments and encouragement, and Marcos de la Oliva for finding a big mistake in a previous version. 

\section{Preliminaries}

Let $\mathcal{H}(\mathbb{D})$ be the space of analytic functions on the disk $\mathbb{D}=\{z\in\mathbb{C}:|z|<1\}.$ For $f\in \mathcal{H}(\mathbb{D})$ and $r\in(0,1)$ let $M_p(r,f)$ be the integral mean $$M_p(r,f)=\left(\int_0^{2\pi}|f(re^{i\theta})|^p\,\frac{d\theta}{2\pi}\right)^{1/p}$$ if $0<p<\infty$ and $$M_\infty(r,f)=\max_{0\leq\theta<2\pi}|f(re^{i\theta})|.$$ 

We consider the spaces $H(p, q, \alpha),$ $0<p,q\leq\infty,$ $0<\alpha<\infty,$ consisting of analytic functions on $\mathbb{D}$ such that $$\|f\|_{p,q,\alpha}^q=\alpha q\int_0^1(1-r)^{\alpha q-1}M_p^q(r,f)\,dr<\infty,$$ if $q<\infty,$ and $$\|f\|_{p,\infty,\alpha}=\sup_{0\leq r<1}(1-r)^\alpha M_p(r,f)<\infty.$$ 

For any $0<p,q\leq\infty,$ $0<\alpha<\infty$ the space $H(p,q,\alpha)$ is a complete subspace of the space $L(p,q,\alpha)$ of measurable functions in $\mathbb{D}$ (see \cite{ben}).

In particular, one can identify the weighted Bergman space $A^p_\alpha,$ $0<p<\infty,$ $-1<\alpha<\infty,$ of analytic functions on the unit disk such that $$\int_{\mathbb{D}}|f(z)|^p(1-|z|^2)^\alpha\,dA(z)<\infty$$ with the space $H\left(p,p,\frac{\alpha+1}{p}\right)$ and the Hardy space $H^p$ of functions in $\mathcal{H}(\mathbb{D})$ for which $$\sup_{0<r<1}M_p(r,f)<\infty$$ with $H(p,\infty,0).$ The mixed norm spaces are also related to other spaces of analytic functions, such as Besov and Lipschitz spaces, via fractional derivatives (see \cite[Chapter 7]{JVA}).

Familiar examples of analytic functions on the unit disk are the functions of type $(1-z)^{-\gamma},$ with $\gamma$ a real constant. It is well known that such function is in the Hardy space $H^p$ if and only if $\gamma<1/p$ and in the Bergman space $A^p$ if and only if $\gamma<2/p.$ The following lemma determines when these functions belong to $H(p,q,\alpha)$ (see \cite{av}).

\begin{lemmax}\label{fg}
Let $0<p\leq\infty,$ $0<\alpha<\infty.$ The functions $f(z)=\frac{1}{(1-z)^\gamma}$ belong to $H(p,q,\alpha),$ $0<q<\infty,$ if and only if $\gamma<\alpha+1/p,$ and to $H(p,\infty,\alpha)$ if and only if $\gamma\leq\alpha+1/p.$
\end{lemmax}

Starting with these examples we can search for functions with faster growth for $z\in\mathbb{R},$ $0<z<1.$ The following lemma gives us examples of functions which attain the critical exponent shown in the last lemma, but still belong to the space (see \cite{av}).

\begin{lemmax}\label{fc}
Let $0<p\leq\infty,$ $0<\alpha<\infty.$ The functions $$f(z)=\frac{1}{(1-z)^{\alpha+1/p}}\left(\log\frac{e}{1-z}\right)^{-c}$$ belong to $H(p,q,\alpha)$ if and only if $c>1/q$ for $q<\infty,$ and $c\geq0$ for $q=\infty.$
\end{lemmax}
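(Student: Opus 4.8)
The plan is to reduce the membership question to the size of the integral means $M_p(r,f)$, and then to a single one–variable integral in $r$. The central estimate I would establish is that, as $r\to 1^-$,
$$M_p(r,f)\approx (1-r)^{-\alpha}\left(\log\frac{e}{1-r}\right)^{-c},\qquad 0<p\le\infty,$$
with comparability constants depending only on $p,\alpha,c$. Granting this, both directions of the equivalence follow at once by inserting the estimate into the definition of the norm.

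For the integral mean estimate I would first simplify the integrand. Taking principal branches, for $z\in\mathbb{D}$ the point $1-z$ lies in the right half–plane, so $\log\frac{e}{1-z}=\log\frac{e}{|1-z|}-i\arg(1-z)$ with $|\arg(1-z)|<\pi/2$; since $|1-z|<2$ forces $\log\frac{e}{|1-z|}\ge\log\frac e2>0$, the real part dominates and
$$\left|\log\frac{e}{1-z}\right|\approx \log\frac{e}{|1-z|}\qquad(z\in\mathbb{D}).$$
Hence $|f(re^{i\theta})|\approx |1-re^{i\theta}|^{-\alpha-1/p}\bigl(\log\frac{e}{|1-re^{i\theta}|}\bigr)^{-c}$ uniformly on $\mathbb{D}$. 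Using the standard comparison $|1-re^{i\theta}|\approx (1-r)+|\theta|$ for $r$ near $1$ and writing $h=1-r$, the problem for $0<p<\infty$ reduces to estimating
$$I(h)=\int_h^{h+\pi}\frac{\bigl(\log\frac{e}{s}\bigr)^{-cp}}{s^{\,p\alpha+1}}\,ds,$$
and I would show $I(h)\approx h^{-p\alpha}\bigl(\log\frac{e}{h}\bigr)^{-cp}$, which yields the claimed estimate for $M_p^q(r,f)$. The lower bound comes from restricting to $s\in[h,2h]$, where both factors are comparable to their values at $s=h$. For the upper bound, since $p\alpha+1>1$ the integral concentrates near the lower endpoint: when $c\le 0$ the factor $\bigl(\log\frac{e}{s}\bigr)^{-cp}$ is decreasing and is simply bounded by its value at $s=h$, while for $c>0$ it is increasing and I would split the integral at $s=\sqrt h$, bounding the slowly varying log factor by a constant multiple of $\bigl(\log\frac{e}{h}\bigr)^{-cp}$ on $[h,\sqrt h]$ and controlling the remainder by $h^{(1-p\alpha)/2}$, which is negligible against the main term. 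For $p=\infty$ one instead maximizes $t\mapsto t^{-\alpha}\bigl(\log\frac{e}{t}\bigr)^{-c}$ over $t=|1-re^{i\theta}|\ge 1-r$; a short derivative computation shows this map is decreasing for small $t$, so for $r$ near $1$ the maximum is attained at $t=1-r$, giving the same estimate.

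With the estimate in hand I would conclude as follows. For $q<\infty$,
$$\|f\|_{p,q,\alpha}^q=\alpha q\int_0^1(1-r)^{\alpha q-1}M_p^q(r,f)\,dr\approx \int_0^1\frac{1}{1-r}\left(\log\frac{e}{1-r}\right)^{-cq}\,dr,$$
and the substitution $t=\log\frac{e}{1-r}$ turns the right–hand side into $\int_1^\infty t^{-cq}\,dt$, finite precisely when $cq>1$, that is $c>1/q$. For $q=\infty$,
$$\|f\|_{p,\infty,\alpha}=\sup_{0\le r<1}(1-r)^\alpha M_p(r,f)\approx \sup_{0\le r<1}\left(\log\frac{e}{1-r}\right)^{-c},$$
which is finite exactly when $c\ge 0$. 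Both conclusions depend only on the behaviour near $r=1$, and on any compact subset of $[0,1)$ the two sides of the integral–mean estimate are continuous and positive, so comparability there is automatic.

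I expect the main obstacle to be the upper bound in the integral mean estimate in the range $c>0$: there the logarithmic factor grows in $s$ and so works against the concentration of the integral near $s=h$, and the splitting argument must be arranged so that the contribution away from the endpoint is genuinely of lower order than the main term $h^{-p\alpha}\bigl(\log\frac{e}{h}\bigr)^{-cp}$.
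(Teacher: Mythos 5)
The paper never proves Lemma~\ref{fc}: it imports the statement from \cite{av}, so there is no internal argument to compare against. Your route is the standard self-contained one for such log-perturbed power functions: a two-sided estimate $M_p(r,f)\approx(1-r)^{-\alpha}\bigl(\log\frac{e}{1-r}\bigr)^{-c}$ followed by the substitution $t=\log\frac{e}{1-r}$, which converts the norm into $\int_1^\infty t^{-cq}\,dt$ and gives both directions at once. The architecture is sound: the comparison $\bigl|\log\frac{e}{1-z}\bigr|\approx\log\frac{e}{|1-z|}$ is correct (since $\mathrm{Re}\,(1-z)>0$ bounds the argument by $\pi/2$ while $|1-z|<2<e$ bounds the real part below by $\log\frac{e}{2}>0$), the reduction via $|1-re^{i\theta}|\approx(1-r)+|\theta|$ is standard, the lower bound from $s\in[h,2h]$ is right, and the final dichotomies $cq>1$ versus $c\geq0$ match the statement, including $p=\infty$.

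There is, however, one step that fails as literally written. In your reduction to $I(h)=\int_h^{h+\pi}s^{-p\alpha-1}\bigl(\log\frac{e}{s}\bigr)^{-cp}\,ds$ you have carried the constant $e$ inside the logarithm beyond its range of validity: for $s>e$ the base $\log\frac{e}{s}$ is negative, so for non-integer $cp$ the integrand is not even defined, and for $c>0$ the written integrand has a singularity at $s=e\in(h,h+\pi)$ that is non-integrable whenever $cp\geq1$ --- so your claimed control of the remainder over $[\sqrt{h},\,h+\pi]$ is false for $I(h)$ as stated. This is purely an artifact: the genuine factor $\log\frac{e}{|1-re^{i\theta}|}$ stays $\geq\log\frac{e}{2}>0$ on all of $\mathbb{D}$, and the comparability $\log\frac{e}{|1-re^{i\theta}|}\approx\log\frac{e}{(1-r)+|\theta|}$ holds only for $(1-r)+|\theta|$ small. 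The routine fix is to truncate the $\theta$-integral at a small fixed $\delta$ (so that $s\leq h+\delta<e$, where your splitting at $s=\sqrt{h}$ works verbatim) and to absorb the region $|1-re^{i\theta}|\geq c_0\delta$, on which $|f|$ is uniformly bounded, into an $O(1)$ term; this is negligible because the main term $h^{-p\alpha}\bigl(\log\frac{e}{h}\bigr)^{-cp}$ tends to infinity. A smaller slip: the remainder from $\int_{\sqrt{h}}s^{-p\alpha-1}\,ds$ is of order $h^{-p\alpha/2}$, not $h^{(1-p\alpha)/2}$ as you wrote; it is larger than your claim but still $o$ of the main term, so the conclusion is unaffected once the truncation above is in place.
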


Another well-known class of analytic functions is the class of lacunary series. Such series belongs to the Hardy space $H^p$ if and only if the sequence formed with its coefficients belongs to the $l^2$ space. In that case (and only then) the function has radial limits almost everywhere, and otherwise, has radial limits almost nowhere. The following result appears in \cite[Thm. 8.1.1]{JVA}, based on \cite{MP}.  

\begin{lemmax}\label{lag}
Let $f(z)=\sum_{n=1}^\infty a_n\,z^{2^{n-1}}$ and $0<p,q\leq\infty,$ $0<\alpha<\infty.$ Then $f\in H(p,q,\alpha)$ if and only if $\{2^{-n\alpha}a_n\}\in l^q.$ 
\end{lemmax}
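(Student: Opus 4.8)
\emph{Proof strategy.} The plan is to reduce everything to the square function
$$M_2^2(r,f)=\sum_{n=1}^\infty|a_n|^2\,r^{2^n},$$
which is exact by orthogonality of the monomials. For a lacunary series the classical Paley inequalities for integral means give $M_2(r,f)\le C\,M_p(r,f)$ for every $0<p\le\infty$, with $C=C(p)$, while trivially $M_p(r,f)\le M_\infty(r,f)\le\sum_{n=1}^\infty|a_n|\,r^{2^n}$. Writing $b_n=2^{-n\alpha}|a_n|$, it therefore suffices to prove the two one-sided estimates
$$\int_0^1(1-r)^{\alpha q-1}M_2^q(r,f)\,dr\ \ge\ A\sum_{n=1}^\infty b_n^q,\qquad\int_0^1(1-r)^{\alpha q-1}\Bigl(\sum_{n=1}^\infty|a_n|r^{2^n}\Bigr)^q\,dr\ \le\ B\sum_{n=1}^\infty b_n^q,$$
because sandwiching $M_p$ between the two integrands then yields $\|f\|_{p,q,\alpha}^q\approx\sum_{n=1}^\infty b_n^q$, which says precisely that $f\in H(p,q,\alpha)$ if and only if $\{2^{-n\alpha}a_n\}\in l^q$.

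The engine of both estimates is a dyadic decomposition of $[0,1)$ into the rings $I_k=[1-2^{-k},\,1-2^{-(k+1)})$, on which $(1-r)^{\alpha q-1}\approx 2^{-k(\alpha q-1)}$, $\int_{I_k}(1-r)^{\alpha q-1}\,dr\approx 2^{-k\alpha q}$, and $r^{2^n}=(1-2^{-k})^{2^n}\approx e^{-2^{n-k}}$ uniformly in $k$. For the lower estimate I keep only the diagonal term $n=k$ in $M_2^2$, where $r^{2^k}\approx e^{-1}$, so that $M_2^q(r,f)\ge A'|a_k|^q$ on $I_k$; integrating over $I_k$ and summing in $k$ produces $\sum_k 2^{-k\alpha q}|a_k|^q=\sum_k b_k^q$. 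For the upper estimate, on $I_k$ one has $(1-r)^\alpha\sum_n|a_n|r^{2^n}\approx\sum_j v_j\,b_{k+j}$, where $j=n-k$ and $v_j=2^{j\alpha}e^{-2^j}$ is a fixed sequence, summable over $j\in\mathbb Z$ (double-exponential decay as $j\to+\infty$ and geometric decay from $2^{j\alpha}$ as $j\to-\infty$). Consequently the second integral is comparable to $\sum_k\bigl(\sum_j v_j\,b_{k+j}\bigr)^q$, and the whole problem reduces to the convolution inequality $\sum_k\bigl(\sum_j v_j b_{k+j}\bigr)^q\le K\sum_n b_n^q$.

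I will prove this convolution inequality by splitting into two regimes. For $q\ge1$, Minkowski's inequality in $l^q$ gives $\bigl\|\sum_j v_j b_{\cdot+j}\bigr\|_{l^q}\le\bigl(\sum_j v_j\bigr)\|b\|_{l^q}$; for $0<q<1$, the subadditivity $\bigl(\sum_j v_j b_{k+j}\bigr)^q\le\sum_j v_j^q b_{k+j}^q$, summed in $k$, gives the bound with constant $\sum_j v_j^q<\infty$. The endpoint cases $q=\infty$ and $p=\infty$ follow the same scheme, with the integral over $I_k$ replaced by a supremum and the convolution sum-inequality by its $l^\infty$ analogue $\sup_k\sum_j v_j b_{k+j}\le\bigl(\sum_j v_j\bigr)\sup_n b_n$, the lower bound again coming from the diagonal term through $M_2\le C(p)M_p$. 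The main obstacle I anticipate is turning the ring-by-ring heuristics into rigorous two-sided bounds with constants independent of $k$ — controlling $r^{2^n}\approx e^{-2^{n-k}}$ and the passage from $M_2^2$ to its diagonal term uniformly — together with the correct invocation of the lacunary inequality $M_2\le C(p)M_p$ in the full range $0<p\le\infty$, which is where the lacunary hypothesis is essential. Once these ingredients are in place, the convolution inequality and the summation in $k$ are routine.
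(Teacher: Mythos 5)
Your proposal is essentially correct, but note that there is nothing internal to compare it against: the paper does not prove this lemma at all, quoting it as a known result from \cite[Thm. 8.1.1]{JVA}, which is in turn based on \cite{MP}. Your route --- comparability of $M_p(r,f)$ with $M_2(r,f)$ for lacunary series (trivial for $p\ge 2$, Zygmund's theorem on Hadamard gap series for $0<p<2$, with constants depending only on $p$ and the gap ratio, hence uniform in $r$), the pointwise majorant for the upper bound, the dyadic rings $I_k=[1-2^{-k},1-2^{-k-1})$, and the discrete convolution inequality split into $q\ge1$ (Minkowski) and $0<q<1$ ($q$-subadditivity) --- is precisely the standard argument underlying that citation, so you have in effect supplied the proof the paper outsources. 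Two small repairs are needed. First, since the monomials are $z^{2^{n-1}}$, your formula $M_2^2(r,f)=\sum_n|a_n|^2r^{2^n}$ is exactly right, but the majorant must read $M_\infty(r,f)\le\sum_n|a_n|\,r^{2^{n-1}}$, not $\sum_n|a_n|\,r^{2^n}$ (the latter is smaller, so it is not an upper bound); on $I_k$ this merely shifts the convolution index by one and replaces $v_j$ by an equally summable kernel. Second, the claim $r^{2^n}\approx e^{-2^{n-k}}$ uniformly on $I_k$ is not multiplicative comparability: what is true is the sandwich $e^{-c_1 2^{n-k}}\le r^{2^n}\le e^{-c_2 2^{n-k}}$ with two different absolute constants $c_1>c_2>0$ in the exponents, so the displayed $\approx$ statements (and ``the second integral is comparable to $\sum_k(\sum_j v_jb_{k+j})^q$'') are overstated --- but your argument only ever uses the two valid one-sided directions, namely the upper bound to build the summable kernel $v_j=2^{j\alpha}e^{-c_2 2^j}$ and the diagonal lower bound $r^{2^k}\ge e^{-c_1}$ on $I_k$, so nothing breaks. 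With these adjustments, and the $q=\infty$, $p=\infty$ endpoints handled as you indicate, the two-sided estimate $\|f\|_{p,q,\alpha}^q\approx\sum_n\bigl(2^{-n\alpha}|a_n|\bigr)^q$ follows and yields the lemma in the full range of parameters.
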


In particular, there are functions with radial limits almost nowhere in each $H(p,q,\alpha)$ with $\alpha>0$ (for instance, the lacunary series with coefficients equal to $1$ satisfies $\sum_{n=0}^\infty2^{-n\alpha q}|a_n|^q<\infty$ for every $0<p,q\leq\infty,$ $0<\alpha<\infty,$ but $\sum_{n=0}^\infty|a_n|^2=\infty$). Therefore, the Hardy space does not contain any $H(p,q,\alpha)$ with $\alpha>0.$

\section{Pointwise and mean estimates}

If $f$ is a function in $H(p,q,\alpha),$ we have the following estimate for its integral means. 

\begin{lemma}\label{mediaintegral}
If $f\in H(p, q, \alpha),$ $0<p\leq\infty,$ $0<q, \alpha<\infty,$ then
$$M_p(r,f)=o\left((1-r)^{-\alpha}\right)$$ as $r\to 1.$
\end{lemma}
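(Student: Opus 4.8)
The plan is to exploit the monotonicity of the integral means together with the convergence of the defining integral. First I would recall that for every fixed $p$ with $0<p\leq\infty$ the integral mean $M_p(r,f)$ is a non-decreasing function of $r\in(0,1)$: for $p=\infty$ this is the maximum modulus principle, and for $0<p<\infty$ it is Hardy's convexity theorem, according to which $\log M_p(r,f)$ is convex in $\log r$ and in particular $M_p(r,f)$ increases with $r$.

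With monotonicity in hand, I would estimate the tail of the defining integral from below. Fix $r\in(0,1)$. Since $M_p(\rho,f)\geq M_p(r,f)$ for every $\rho\in[r,1)$, we obtain
$$\alpha q\int_r^1(1-\rho)^{\alpha q-1}M_p^q(\rho,f)\,d\rho\geq\alpha q\,M_p^q(r,f)\int_r^1(1-\rho)^{\alpha q-1}\,d\rho=M_p^q(r,f)\,(1-r)^{\alpha q},$$
where the last equality uses $\int_r^1(1-\rho)^{\alpha q-1}\,d\rho=(1-r)^{\alpha q}/(\alpha q)$. Rearranging yields the pointwise bound
$$(1-r)^\alpha M_p(r,f)\leq\left(\alpha q\int_r^1(1-\rho)^{\alpha q-1}M_p^q(\rho,f)\,d\rho\right)^{1/q}.$$

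The final step uses the hypothesis $f\in H(p,q,\alpha)$, which says precisely that $\alpha q\int_0^1(1-\rho)^{\alpha q-1}M_p^q(\rho,f)\,d\rho=\|f\|_{p,q,\alpha}^q<\infty$. Being the tail of a convergent integral, the quantity $\int_r^1(1-\rho)^{\alpha q-1}M_p^q(\rho,f)\,d\rho$ tends to $0$ as $r\to1$, and hence so does the right-hand side of the displayed inequality. This gives $(1-r)^\alpha M_p(r,f)\to0$, that is, $M_p(r,f)=o\left((1-r)^{-\alpha}\right)$ as $r\to1$, as claimed. I do not expect a serious obstacle; the only point requiring care is the appeal to the monotonicity of $M_p(r,f)$, which must be justified uniformly across the full range $0<p\leq\infty$, and the observation that the whole argument rests on the finiteness of the defining integral, which is exactly the standing hypothesis.
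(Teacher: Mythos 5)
Your proof is correct and follows essentially the same route as the paper: both arguments use the monotonicity of $r\mapsto M_p(r,f)$ to bound $(1-r)^{\alpha q}M_p^q(r,f)$ by the tail $\alpha q\int_r^1(1-\rho)^{\alpha q-1}M_p^q(\rho,f)\,d\rho$, which tends to zero because the defining integral converges. The only cosmetic difference is that the paper phrases the conclusion with an explicit $\varepsilon$--$r_0$ argument, while you state directly that the tail of a convergent integral vanishes; these are the same step.
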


\begin{proof}

Since the integral $$\alpha q\int_0^r(1-\rho)^{\alpha q-1}M_p^q(\rho,f)\,d\rho$$
converges to $\|f\|_{p,q,\alpha}^q$ as $r\to 1,$ then for every $\varepsilon>0$ there exists $r_0$ such that  
\begin{equation}\label{varepsilon}\alpha q\int_r^1(1-\rho)^{\alpha q-1}M_p^q(\rho,f)\,d\rho<\varepsilon\end{equation} for
every $r>r_0.$
Therefore, since the integral means are increasing as functions of $r,$ we get
\begin{align*}
(1-r)^{\alpha q}M_p^q(r,f)&=\alpha q\int_{r}^1(1-\rho)^{\alpha q-1}M_p^q(r,f)\,d\rho\\&\leq\alpha q\int_r^1(1-\rho)^{\alpha q-1}M_p^q(\rho,f)\,d\rho<\varepsilon.
\end{align*}
\end{proof}

Moreover, it follows from the proof that if $f\in H(p,q,\alpha),$ then \begin{equation}\label{alpha}M_p(r,f)\leq\frac{\|f\|_{p,q,\alpha}}{(1-r)^{\alpha}}\end{equation} since we can bound the integral in (\ref{varepsilon}) by the norm of $f$ instead of $\varepsilon.$ Notice that, taking supremum over $r,$ we get \begin{equation}\label{inftynorm}\|f\|_{p,\infty,\alpha}\leq\|f\|_{p,q,\alpha},\end{equation} and therefore $H(p,q,\alpha)\subseteq H(p,\infty,\alpha)$ for every $0<p,q\leq\infty,$ $0<\alpha<\infty.$

Although the result in Lemma~\ref{mediaintegral} fails for $q=\infty$ as the function $f(z)=(1-~z)^{-\alpha-1/p}$ shows, the above bound for the integral mean still holds since $$\|f\|_{p,\infty,\alpha}=\sup_{0\leq \rho<1}(1-\rho)^\alpha M_p(\rho,f)\geq(1-r)^\alpha M_p(r,f)$$ for any $r,$ $0<r<1,$  and therefore \begin{equation}\label{infty}M_p(r,f)\leq\frac{\|f\|_{p,\infty,\alpha}}{(1-r)^{\alpha}}\end{equation} for $f\in H(p,\infty,\alpha).$

For the Bergman spaces $A^p,$ besides the well-known big-Oh growth inequality, we have the estimate $|f(z)|=o\left((1-|z|)^{2/p}\right)$ as $|z|\to 1$ for every $f\in A^p.$ This is a consequence of the subharmonicity of $|f|^p$ and the inequality: $$\int_{D(a,r)}|f(z)|^p\,dA(z)\leq\int_\mathbb{D}|f(z)|^p\,dA(z)=\|f\|_{A^p}^p$$ for $a\in\mathbb{D}$ and $r<1$ (see \cite[Page 7]{DurSchu}). We can obtain an analogous result for the Hardy spaces with similar techniques that cannot be used in the mixed norm spaces. However, the result still holds, as we shall show next.

\begin{prop} \label{acotacion}
If $f\in H(p, q, \alpha),$ $0<p\leq\infty,$ $0<q, \alpha<\infty,$ then 
$$|f(z)|=o\left((1-|z|)^{\alpha+1/p}\right)$$ as $|z|\to 1.$
\end{prop}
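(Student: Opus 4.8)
The plan is to derive the pointwise bound from the mean estimate of Lemma~\ref{mediaintegral} by means of a standard subharmonicity argument that controls $|f(z)|$ through the integral mean $M_p$ at a slightly larger radius. The case $p=\infty$ is immediate, since $|f(z)|\leq M_\infty(|z|,f)$ and Lemma~\ref{mediaintegral} already gives $M_\infty(r,f)=o((1-r)^{-\alpha})$, which is the desired conclusion with $1/p=0$. For $0<p<\infty$ I would first prove an auxiliary pointwise-to-mean inequality and then feed Lemma~\ref{mediaintegral} into it.

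For the auxiliary inequality, fix $z\in\mathbb{D}$, write $r=|z|$, and set $d=(1-r)/2$, so that $D(z,d)\subset\mathbb{D}$. As $|f|^{p}$ is subharmonic for analytic $f$, the sub-mean-value property yields
\begin{equation*}
|f(z)|^{p}\leq\frac{1}{\pi d^{2}}\int_{D(z,d)}|f(w)|^{p}\,dA(w).
\end{equation*}
Since $D(z,d)$ is contained in the annulus $\{(3r-1)/2\leq|w|\leq(1+r)/2\}$, whose radii differ by $1-r$, and since $M_p(\rho,f)$ is nondecreasing in $\rho$, integrating in polar coordinates bounds the right-hand side by a constant multiple of $(1-r)^{-1}M_p^{p}\big((1+r)/2,f\big)$. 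Taking $p$-th roots gives
\begin{equation*}
|f(z)|\leq\frac{C}{(1-r)^{1/p}}\,M_p\!\left(\frac{1+r}{2},f\right)
\end{equation*}
for an absolute constant $C$.

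It then remains to substitute Lemma~\ref{mediaintegral}. With $\rho=(1+r)/2$ one has $1-\rho=(1-r)/2$, hence $M_p(\rho,f)=o((1-r)^{-\alpha})$ as $r\to1$; inserting this into the last inequality produces
\begin{equation*}
|f(z)|\leq\frac{C}{(1-r)^{1/p}}\,o\!\left((1-r)^{-\alpha}\right)=o\!\left(\frac{1}{(1-|z|)^{\alpha+1/p}}\right),
\end{equation*}
as claimed. The main obstacle is the auxiliary inequality: the radius of $D(z,d)$ must be taken proportional to $1-r$ so that the disk both remains inside $\mathbb{D}$ and lies in an annulus of width comparable to $1-r$ on which the monotonicity of $M_p$ can be used, and the powers of $(1-r)$ must then be tracked so that precisely the exponent $1/p$ is generated. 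It is exactly this monotonicity that lets the little-oh from Lemma~\ref{mediaintegral} survive the substitution $\rho=(1+r)/2$.
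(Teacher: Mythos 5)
Your proof is correct, but it takes a genuinely different route from the paper's. The paper bounds $|f(re^{i\theta})|$ via the Poisson integral, obtaining $|f(re^{i\theta})|(\rho-r)^{1/p}\leq 2^{1/p}M_p(\rho,f)$ for every $\rho\in(r,1)$ (inequality (\ref{acot})), and then, rather than fixing one radius, integrates this in $\rho$ against the weight $(1-\rho)^{\alpha q-1}(\rho-r)^{q/p}$ over $(r,1)$: the Beta-function identity of Lemma~\ref{lemma} evaluates the left-hand side exactly, while the right-hand side is the tail of the norm integral, which tends to zero as in (\ref{varepsilon}). Your argument replaces the Poisson estimate by the area sub-mean-value property of the subharmonic function $|f|^p$ on $D(z,(1-|z|)/2)$, which amounts to the single-radius choice $\rho=(1+r)/2$ of essentially the same pointwise-to-mean inequality (with constant $8^{1/p}$ in place of $2^{1/p}$; note your $C$ depends on $p$ and is not absolute, a harmless slip), and then feeds in Lemma~\ref{mediaintegral} as a black box — legitimately, since $1-\rho=(1-r)/2\to0$, so the little-oh survives the substitution. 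Your version is shorter and more modular, and it is worth noting that it quietly refutes the paper's remark that the subharmonicity technique from the Bergman setting ``cannot be used'' here: it can, provided one routes through the mean estimate of Lemma~\ref{mediaintegral} instead of through absolute continuity of the area integral. What the paper's heavier computation buys is the explicit constant $m$ of (\ref{m}): the weighted integration immediately yields the quantitative bound (\ref{puntual}) and hence Lemma~\ref{medias}, both reused with explicit constants in the norm estimates for Theorem~\ref{inclusiones2}, whereas your single-radius choice produces a cruder constant and only the asymptotic statement. Your treatment of $p=\infty$, via $|f(z)|\leq M_\infty(|z|,f)$ and Lemma~\ref{mediaintegral} directly, is if anything cleaner than the paper's.
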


In the proof we will use the following identity.

\begin{lemma}\label{lemma}
For $0<p,q,\alpha<\infty$ and $z\in\mathbb{D},$  
\begin{align*}
\int_{|z|}^1(1-\rho)^{\alpha q-1}(\rho-|z|)^{q/p}\,d\rho=B(\alpha q,q/p+1)\,(1-|z|)^{\alpha q+q/p},
\end{align*}
where $B(a,b)=\int_0^1(1-x)^{a-1}x^{b-1}\,dx,$ $a,b>0,$ is the Beta function.
\end{lemma}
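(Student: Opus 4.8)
The plan is to evaluate the integral
\[
I = \int_{|z|}^1 (1-\rho)^{\alpha q - 1}(\rho - |z|)^{q/p}\, d\rho
\]
by reducing it to the standard Beta integral through a linear substitution. The guiding idea is that the integrand is supported on $[|z|,1]$, has length $1-|z|$, and the two factors $(1-\rho)^{\alpha q - 1}$ and $(\rho - |z|)^{q/p}$ vanish (to fractional order) at the right and left endpoints respectively, which is exactly the shape of a Beta-function integrand after rescaling the interval to $[0,1]$.

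First I would write $a := |z|$ for brevity and make the substitution $\rho = a + (1-a)x$, so that $x$ runs over $[0,1]$ as $\rho$ runs over $[a,1]$, and $d\rho = (1-a)\,dx$. Under this change one computes $1 - \rho = (1-a)(1-x)$ and $\rho - a = (1-a)x$, hence
\[
(1-\rho)^{\alpha q - 1} = (1-a)^{\alpha q - 1}(1-x)^{\alpha q - 1},
\qquad
(\rho - a)^{q/p} = (1-a)^{q/p} x^{q/p}.
\]
Substituting these in and collecting the powers of $(1-a)$ — namely $(1-a)^{\alpha q - 1}$, $(1-a)^{q/p}$, and the extra $(1-a)$ from $d\rho$ — gives a total factor $(1-a)^{\alpha q + q/p}$ pulled outside the integral. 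What remains is
\[
(1-a)^{\alpha q + q/p}\int_0^1 (1-x)^{\alpha q - 1}\, x^{q/p}\, dx.
\]

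It then suffices to identify the remaining integral with a value of the Beta function. Matching $\int_0^1 (1-x)^{c-1}x^{d-1}\,dx = B(c,d)$ against our integrand forces $c - 1 = \alpha q - 1$ and $d - 1 = q/p$, i.e. $c = \alpha q$ and $d = q/p + 1$; both exponents $c$ and $d$ are strictly positive precisely because $\alpha, p, q > 0$, so the Beta integral converges and equals $B(\alpha q, q/p + 1)$. Restoring $a = |z|$ yields the claimed identity. I do not anticipate any genuine obstacle here: the computation is a routine linear change of variables, and the only point requiring a word of justification is that the hypotheses $0 < p, q, \alpha < \infty$ guarantee positivity of both Beta parameters so that the formula is valid (no divergence at either endpoint).
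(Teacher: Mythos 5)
Your proof is correct and coincides with the paper's own argument: both use the linear substitution $x=\frac{\rho-|z|}{1-|z|}$ (equivalently $\rho=|z|+(1-|z|)x$) to pull out the factor $(1-|z|)^{\alpha q+q/p}$ and identify the remaining integral as $B(\alpha q, q/p+1)$. Your added remark that $\alpha q>0$ and $q/p+1>0$ ensure convergence of the Beta integral is a sound, if routine, supplement.
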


\begin{proof}
With the change of variables $x=\frac{\rho-|z|}{1-|z|},$ 
\begin{align*}\int_{|z|}^1(1-\rho)^{\alpha q-1}(\rho-|z|)^{q/p}\,d\rho&=\int_0^1(1-x)^{\alpha q-1}(1-|z|)^{\alpha q-1}x^{q/p}(1-|z|)^{q/p}(1-|z|)\,dx\\&=(1-|z|)^{\alpha q+q/p}\int_0^1(1-x)^{\alpha q-1}x^{q/p}\,dx.\end{align*}
\end{proof}

Next, we prove Proposition~\ref{acotacion}.

\begin{proof}[Proof of Proposition~\ref{acotacion}]
If $p=\infty,$ it is easy to see that, for $r$ close enough to 1 (as in Lemma~\ref{mediaintegral}),
\begin{align}
|f(re^{i\theta})|^q(1-r)^{\alpha q}&=\alpha q\,|f(re^{i\theta})|^q\int_r^1(1-\rho)^{\alpha q-1}\,d\rho\\&\leq\alpha q\int_r^1(1-\rho)^{\alpha q-1}M_\infty^q(\rho,f)\,d\rho<\varepsilon.
\notag\end{align}
If $0<p<\infty,$ we estimate the integral mean $M_p(r,f)$ using the Poisson integral: 


Let $\rho\in(0,1)$ and define $f_\rho(z)=f(\rho z),$ for $f\in\mathcal{H}(\mathbb{D})$ and $z\in\mathbb{D}.$ Since $f_\rho\in H^\infty$ for any $f\in H(p,q,\alpha)$ and $r<\rho$ we have, as in \cite{Gab},
\begin{align*}|f(re^{i\theta})|^p&\leq\frac{1}{2\pi}\int_0^{2\pi}|f(\rho e^{it})|^p\frac{\rho^2-r^2}{|\rho-re^{i(\theta-t)}|^2}\,dt\leq\frac{1}{2\pi}\int_0^{2\pi}|f(\rho e^{it})|^p\frac{\rho^2-r^2}{(\rho-r)^2}\,dt\\&\leq \frac{2}{\rho-r}\frac{1}{2\pi}\int_0^{2\pi}|f(\rho e^{it})|^p\,dt=\frac{2}{\rho-r}\,M_p^p(\rho,f).\end{align*}
Hence, \begin{equation}\label{acot} |f(re^{i\theta})|(\rho-r)^{1/p}\leq 2^{1/p}M_p(\rho,f).\end{equation}
Let $\varepsilon>0,$ then for $r$ close to 1 we have, using the above estimate and Lemma~\ref{lemma}, as in (\ref{varepsilon}),
\begin{gather}\label{varepsilon2}
\frac{\alpha q}{2^{q/p}}\,B(\alpha q,q/p+1)\,|f(re^{i\theta})|^q(1-r)^{\alpha q+q/p}\\=\frac{\alpha q}{2^{q/p}}|f(re^{i\theta})|^q\int_r^1(1-\rho)^{\alpha q-1}(\rho-r)^{q/p}\,d\rho\notag\\\leq\alpha q\int_r^1(1-\rho)^{\alpha q-1}M_p^q(\rho,f)\,d\rho<\varepsilon.
\notag\end{gather}
\end{proof}

From the above proof, the following known pointwise estimate also follows: if we bound the integral in (\ref{varepsilon2}) by $\|f\|_{p,q,\alpha},$
then \begin{equation}\label{puntual}|f(z)|\leq m\frac{\|f\|_{p,q,\alpha}}{(1-|z|)^{\alpha+1/p}},\end{equation} with 

\begin{equation}\label{m}m=
\begin{cases}
{\displaystyle\frac{2^{1/p}}{(\alpha q\,B(\alpha q,q/p+1))^{1/q}}}\text{\,\quad if } p<\infty\\
\\
\quad\quad\quad\ \quad1\text{\quad\quad\quad\quad\quad\quad\ if } p=\infty.
\end{cases}\end{equation}

One should notice that, once again, this Proposition does not hold for $q=\infty,$ as the function $f(z)=(1-z)^{-\alpha-1/p}$ shows. However, the pointwise estimation is still true (see \cite[Prop. 7.1.1]{JVA}).

If we denote by $\phi_z$ the point-evaluation functional: $$\phi_z(f)=f(z),$$ for $f\in H(p,q,\alpha),$ and $z\in\mathbb{D},$ it is easy to see that its norm can be estimated as follows: \begin{equation}\label{functional}\|\phi_z\|\leq \frac{m}{(1-|z|)^{\alpha+\frac{1}{p}}},\end{equation} with $m$ as in (\ref{m}).

Now, for a given $z$ in $\mathbb{D},$ we will find a function $f_z$ in $H(p,q,\alpha)$ with pointwise growth of maximal order, that is: $\|f_z\|\approx 1$ and $$|f_z(z)|\approx\|\phi_z\|\approx\frac{1}{(1-|z|)^{\alpha+1/p}}.$$
Here we give a general version of a well-known fact for Bergman spaces.

\begin{prop}\label{igualdad}
For $z\in\mathbb{D},$ $0<p,q\leq\infty,$ $0<\alpha<\infty$ and $s>0,$ the functions $$f_z(w)=\frac{(1-|z|^2)^s}{(1-\bar{z}w)^{\alpha+\frac{1}{p}+s}}$$ satisfy $|f_z(z)|\approx\|\phi_z\|$ and $\|f_z\|_{p,q,\alpha}\approx 1.$
\end{prop}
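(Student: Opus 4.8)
The plan is to compute $|f_z(z)|$ directly, to estimate $\|f_z\|_{p,q,\alpha}$ by means of the classical asymptotics for integral means of $(1-\bar z w)^{-\gamma}$, and then to recover $\|\phi_z\|$ by combining these two facts with the bound (\ref{functional}). First, evaluating at $w=z$ gives $f_z(z)=(1-|z|^2)^{-(\alpha+1/p)}$, so that, since $1\le 1+|z|\le 2$, we have $|f_z(z)|\approx(1-|z|)^{-(\alpha+1/p)}$; this already identifies the order of growth we are aiming for.

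The main step is to show $\|f_z\|_{p,q,\alpha}\approx 1$ with constants independent of $z$. Writing $\lambda=(\alpha+\frac1p+s)p=\alpha p+1+sp>1$, the well-known estimate $\frac1{2\pi}\int_0^{2\pi}|1-ae^{i\theta}|^{-\lambda}\,d\theta\approx(1-|a|)^{-(\lambda-1)}$, uniform in $a\in\mathbb D$, applied with $a=\bar z r$ yields, for $0<p<\infty$,
$$M_p(r,f_z)\approx(1-|z|^2)^s\,(1-|z|r)^{-(\alpha+s)},$$
the same relation holding with equality when $p=\infty$, since there $M_\infty(r,f_z)=(1-|z|^2)^s(1-|z|r)^{-(\alpha+s)}$. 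For $q<\infty$ I would insert this into the definition of the norm and use the standard integral estimate $\int_0^1(1-r)^{\alpha q-1}(1-|z|r)^{-(\alpha+s)q}\,dr\approx(1-|z|)^{-sq}$, valid because $(\alpha+s)q>\alpha q$; together with $(1-|z|^2)^{sq}\approx(1-|z|)^{sq}$ this gives $\|f_z\|_{p,q,\alpha}^q\approx 1$. For $q=\infty$ the same mean estimate reduces the claim to $\sup_{0\le r<1}(1-r)^\alpha(1-|z|r)^{-(\alpha+s)}\approx(1-|z|)^{-s}$: the upper bound follows from the factorization $(1-r)^\alpha(1-|z|r)^{-(\alpha+s)}=\bigl(\tfrac{1-r}{1-|z|r}\bigr)^\alpha(1-|z|r)^{-s}\le(1-|z|)^{-s}$, using $1-|z|r\ge 1-r$ and $1-|z|r\ge 1-|z|$, while the matching lower bound comes from evaluating at $r=|z|$.

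Finally, I would assemble the norm of the point evaluation. The upper estimate (\ref{functional}) gives $\|\phi_z\|\le m(1-|z|)^{-(\alpha+1/p)}\approx|f_z(z)|$, and testing $\phi_z$ against $f_z/\|f_z\|_{p,q,\alpha}$ gives $\|\phi_z\|\ge|f_z(z)|/\|f_z\|_{p,q,\alpha}\approx|f_z(z)|$, since $\|f_z\|_{p,q,\alpha}\approx 1$. Hence $\|\phi_z\|\approx|f_z(z)|$, which is the remaining assertion.

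I expect the main obstacle to lie in handling the uniformity in $z$ of the two classical asymptotic estimates and in bookkeeping the comparability constants, especially in matching the exponents so that the factors $(1-|z|^2)^{sq}$ and $(1-|z|)^{-sq}$ cancel exactly; the case $q=\infty$ requires a separate, though entirely elementary, maximization in $r$.
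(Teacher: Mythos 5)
Your proof is correct and follows essentially the same route as the paper: both compute $M_p(r,f_z)\approx(1-|z|^2)^s(1-r|z|)^{-(\alpha+s)}$ from the classical mean estimate for $(1-\bar z w)^{-\lambda}$ (with equality when $p=\infty$), handle $q=\infty$ by the same elementary sup bounds, and obtain $\|\phi_z\|\approx|f_z(z)|$ by combining the upper bound (\ref{functional}) with testing against $f_z/\|f_z\|_{p,q,\alpha}$. The only cosmetic difference is that where you invoke the standard two-sided estimate $\int_0^1(1-r)^{\alpha q-1}(1-|z|r)^{-(\alpha+s)q}\,dr\approx(1-|z|)^{-sq}$, the paper verifies it inline (lower bound by restricting to $[|z|,1]$, upper bound by integration by parts), which is the same computation made self-contained.
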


\begin{proof}

First we check that $f_z$ belongs to $H(p,q,\alpha)$ and estimate its norm:

If $w=re^{i\theta},$ then (see \cite[Page 65]{Dur})
\begin{align*}M_p^p(r,f_z)&=\int_0^{2\pi}\frac{(1-|z|^2)^{ps}}{|1-\bar{z}re^{i\theta}|^{p(\alpha+\frac{1}{p}+s)}}\frac{d\theta}{2\pi}\approx\frac{(1-|z|^2)^{ps}}{(1-r|z|)^{p(\alpha+\frac{1}{p}+s)-1}}=\frac{(1-|z|^2)^{ps}}{(1-r|z|)^{(\alpha+s)p}},
\end{align*}
for $p<\infty$ and $$M_\infty(r,f_z)\approx\frac{(1-|z|^2)^{s}}{(1-r|z|)^{\alpha+s}}.$$
Therefore, if $q<\infty$ and $0<p\leq\infty,$
\begin{align*}\|f_z\|^q_{p,q,\alpha}&=\alpha q\int_0^1(1-r)^{\alpha q-1}M_p^q(r,f_z)\,dr\\&\approx\alpha q\,(1-|z|)^{sq}\int_0^1(1-r)^{\alpha q-1}\frac{1}{(1-r|z|)^{(\alpha+s)q}}\,dr.\\
\end{align*}
Now, on the one hand,
\begin{align*}\|f_z\|^q_{p,q,\alpha}&\approx\alpha q\,(1-|z|)^{sq}\int_0^1(1-r)^{\alpha q-1}\frac{1}{(1-r|z|)^{(\alpha+s)q}}\,dr\\&\geq\alpha q\,(1-|z|)^{sq}\int_{|z|}^1(1-r)^{\alpha q-1}\frac{1}{(1-r|z|)^{(\alpha+s)q}}\,dr\\&\geq\alpha q\,\frac{(1-|z|)^{sq}}{(1-|z|^2)^{(\alpha+s)q}}\int_{|z|}^1(1-r)^{\alpha q-1}\,dr\\&\approx\frac{1}{(1-|z|)^{\alpha q}}(1-|z|)^{\alpha q}=1
\end{align*}
and, on the other hand, integrating by parts and using $(1-r)^{\alpha q}\leq(1-r|z|)^{\alpha q},$ 
\begin{align*}
\|f_z\|^q_{p,q,\alpha}&\approx(1-|z|)^{sq}\int_0^1\alpha q\,(1-r)^{\alpha q-1}\frac{1}{(1-r|z|)^{(\alpha+s)q}}\,dr\\&=(1-|z|)^{sq}\left(1-(\alpha+s)q\,|z|\,\int_0^1(1-r)^{\alpha q}\frac{1}{(1-r|z|)^{^{(\alpha+s)q+1}}}\,dr\right)\\&\leq(1-|z|)^{sq}\left(1-(\alpha+s)q\,|z|\,\int_0^1(1-r|z|)^{-(sq+1)}\,dr\right)\\&=(1-|z|)^{sq}\left(1-\frac{\alpha+s}{s}\,\left((1-|z|)^{-sq}-1\right)\right)\\&=\left(1+\frac{\alpha+s}{s}\right)(1-|z|)^{sq}-\frac{\alpha+s}{s}\approx 1.
\end{align*}

If $q=\infty,$ 
\begin{align*}
\|f_z\|_{p,\infty,\alpha}=\sup_{0\leq r<1}(1-r)^\alpha M_p(r,f_z)\approx\sup_{0\leq r<1}(1-r)^\alpha\frac{(1-|z|)^s}{(1-r|z|)^{\alpha+s}}.
\end{align*}
Since $1-r\leq1-r|z|$ and $1-|z|\leq1-r|z|,$ we have $$\|f_z\|_{p,\infty,\alpha}\approx\sup_{0\leq r<1}(1-r)^\alpha\frac{(1-|z|)^s}{(1-r|z|)^{\alpha+s}}\leq\sup_{0\leq r<1}(1-r|z|)^\alpha\frac{(1-r|z|)^s}{(1-r|z|)^{\alpha+s}}=1$$
hence
\begin{align*}\|f_z\|_{p,\infty,\alpha}&\geq\sup_{|z|<r<1}(1-r)^\alpha\frac{(1-|z|)^s}{(1-r|z|)^{\alpha+s}}\geq\frac{(1-|z|)^s}{(1-|z|^2)^{\alpha+s}}\sup_{|z|<r<1}(1-r)^\alpha\\&=\frac{(1-|z|)^s(1-|z|)^\alpha}{(1-|z|^2)^{\alpha+s}}\approx1.\end{align*}

Now that we know that $f_z\in H(p,q,\alpha),$ we see easily that
$$|f_z(z)|=\frac{(1-|z|^2)^s}{(1-|z|^2)^{\alpha+\frac{1}{p}+s}}=\frac{1}{(1-|z|^2)^{\alpha+\frac{1}{p}}}\approx\frac{1}{(1-|z|)^{\alpha+\frac{1}{p}}}$$
and from here
$$\|\phi_z\|\approx\|\phi_z\|\|f_z\|_{p,q,\alpha}\geq|f_z(z)|\approx\frac{1}{(1-|z|)^{\alpha+\frac{1}{p}}}.$$
With (\ref{functional}), we get $$|f_z(z)|\approx\|\phi_z\|\approx\frac{1}{(1-|z|)^{\alpha+\frac{1}{p}}}.$$
\end{proof}

\section{Inclusions between mixed norm spaces}

The main theorems in this work, which characterize completely the inclusions between mixed norm spaces, are the following. To avoid repetitions, we recall here that we are assuming our parameters to be $0<\alpha, \beta<\infty$ and $0<p,q,u,v\leq\infty.$

\begin{thm}\label{inclusiones1} 
If $p\geq u,$ then
$$H(p,q,\alpha)\subseteq H(u,v,\beta)\Leftrightarrow
\begin{cases}
\alpha<\beta\text{\quad or}\\
\alpha=\beta\text{\quad and\quad}q\leq v.
\end{cases}
$$
\end{thm}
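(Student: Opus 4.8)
The plan is to prove the equivalence in two directions, treating the sufficiency (right-to-left) by a norm estimate on the inclusion operator and the necessity (left-to-right) by exhibiting explicit counterexamples drawn from Lemmas~\ref{fg} and~\ref{fc}. The hypothesis $p\geq u$ is essential because it lets me dispose of the first index for free: since $M_u(r,f)\leq M_p(r,f)$ for $p\geq u$ (monotonicity of $L^p$-means on the normalized circle), controlling the $p$-mean automatically controls the $u$-mean. This reduces the whole problem to comparing the weight exponents $\alpha,\beta$ and the outer exponents $q,v$, so that morally I only need to understand when $H(p,q,\alpha)\subseteq H(p,v,\beta)$.

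For sufficiency, suppose first $\alpha=\beta$ and $q\leq v$. Using $M_u(r,f)\leq M_p(r,f)$ together with the pointwise-in-$r$ bound $M_p(r,f)\leq\|f\|_{p,q,\alpha}(1-r)^{-\alpha}$ from~(\ref{alpha}), I would write
\begin{align*}
\|f\|_{u,v,\alpha}^{v}
&=\alpha v\int_0^1(1-r)^{\alpha v-1}M_u^{v}(r,f)\,dr\\
&\leq \alpha v\int_0^1(1-r)^{\alpha v-1}M_p^{v-q}(r,f)\,M_p^{q}(r,f)\,dr\\
&\leq \alpha v\,\|f\|_{p,q,\alpha}^{v-q}\int_0^1(1-r)^{\alpha v-1}(1-r)^{-\alpha(v-q)}M_p^{q}(r,f)\,dr,
\end{align*}
and the weight collapses to $(1-r)^{\alpha q-1}$, yielding $\|f\|_{u,v,\alpha}^{v}\leq C\,\|f\|_{p,q,\alpha}^{v}$; the $q=\infty$ or $v=\infty$ edge cases follow from~(\ref{inftynorm}) and~(\ref{infty}) with the same idea. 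When instead $\alpha<\beta$, the extra decay in the weight is decisive: I split the integral at a fixed radius, bound $M_u\leq M_p\leq\|f\|_{p,q,\alpha}(1-r)^{-\alpha}$ on the whole interval, and observe that $\int_0^1(1-r)^{\beta v-1-\alpha v}\,dr$ converges precisely because $\beta>\alpha$, giving a finite bound on $\|f\|_{u,v,\beta}$. This furnishes the norm bound for the inclusion operator that the introduction promises.

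For necessity I argue by contraposition, constructing in each failing regime a function in $H(p,q,\alpha)$ that is not in $H(u,v,\beta)$. If $\beta<\alpha$, take $f(z)=(1-z)^{-\gamma}$ with $\gamma$ chosen strictly between the two critical exponents, namely $\beta+1/u\leq\gamma<\alpha+1/p$; since $1/p\leq 1/u$ one can always slot such a $\gamma$ in, and by Lemma~\ref{fg} this $f$ lies in $H(p,q,\alpha)$ but not in $H(u,v,\beta)$. The remaining and most delicate regime is the \emph{borderline} $\alpha=\beta$ with $q>v$, where the power functions no longer separate the spaces and I must use the logarithmically corrected examples of Lemma~\ref{fc}: take $f(z)=(1-z)^{-(\alpha+1/p)}\bigl(\log\frac{e}{1-z}\bigr)^{-c}$ with $c$ chosen so that $1/q<c\leq 1/v$ (possible exactly because $q>v$ forces $1/q<1/v$), so that $f\in H(p,q,\alpha)$ while $f\notin H(u,\cdot,\beta)$ at the same critical power. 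I expect this borderline case to be the main obstacle: one must match the critical exponents $\alpha+1/p$ and $\beta+1/u$ carefully when $p>u$ (they differ), so I anticipate needing a hybrid example combining a power slightly below $\alpha+1/p$ with the logarithmic factor, and verifying membership via Lemma~\ref{fc} (or a direct mean computation) rather than quoting it verbatim. Once all three failing regimes produce counterexamples, the contrapositive is complete and the equivalence follows.
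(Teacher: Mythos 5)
Your sufficiency direction is correct and essentially reproduces the paper's argument: monotonicity $M_u(r,f)\leq M_p(r,f)$, the bound (\ref{alpha}), and the factorization $M_p^v=M_p^{v-q}M_p^q$ (as in Lemma~\ref{acotacionmedia}), with (\ref{inftynorm}) and (\ref{infty}) covering the $\infty$ cases. The necessity direction, however, contains a genuine gap, precisely at the step you justify with a parenthetical. You claim that for $\beta<\alpha$ one can choose $\gamma$ with $\beta+1/u\leq\gamma<\alpha+1/p$ ``since $1/p\leq 1/u$'' --- but that inequality works \emph{against} you, not for you. Under $p\geq u$, the condition $\alpha>\beta$ does not imply $\beta+1/u<\alpha+1/p$: take $p=2$, $u=1$, $\alpha=1$, $\beta=9/10$; then $\alpha+1/p=3/2<19/10=\beta+1/u$, and your interval of admissible $\gamma$ is empty, so Lemma~\ref{fg} produces no separating example.

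The failure is structural, not a matter of tuning. Whenever $\alpha+1/p<\beta+1/u$ --- which is compatible with $\alpha>\beta$, and is automatic in your borderline case $\alpha=\beta$ with $p>u$ strict --- no function of power--log type can separate the spaces: if $f(z)=(1-z)^{-\gamma}\left(\log\frac{e}{1-z}\right)^{-c}$ with $c\geq0$ belongs to $H(p,q,\alpha)$, then $\gamma\leq\alpha+1/p<\beta+1/u$ by Lemmas~\ref{fg} and~\ref{fc}, and since the logarithmic factor is bounded on $\mathbb{D}$ we get $|f(z)|\leq C\,|1-z|^{-\gamma}$ with $\gamma$ strictly subcritical for $H(u,v,\beta)$, hence $f\in H(u,v,\beta)$. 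In particular your proposed borderline function $(1-z)^{-(\alpha+1/p)}\left(\log\frac{e}{1-z}\right)^{-c}$ lies in $H(u,v,\beta)$ for \emph{every} $c$ once $p>u$, and the ``hybrid'' you anticipate (a power slightly below $\alpha+1/p$ with a log factor) lies there a fortiori. The missing idea is Lemma~\ref{lag}: lacunary series, whose membership criterion $\{2^{-n\alpha}a_n\}\in l^q$ does not involve the first index at all --- exactly the right tool when $p\geq u$, where the separating parameter is $\alpha$ alone rather than $\alpha+1/p$. The paper takes $f(z)=\sum_{n=1}^\infty 2^{n\beta}z^{2^{n-1}}$ when $\alpha>\beta$ (in $H(p,q,\alpha)$ since $\{2^{-n(\alpha-\beta)}\}\in l^q$, outside $H(u,v,\beta)$ since the constant sequence is not in $l^v$), and $f(z)=\sum_{n=1}^\infty 2^{n\alpha}n^{-1/v}z^{2^{n-1}}$ when $\alpha=\beta$ and $q>v$; this settles necessity for all $p\geq u$ at once. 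Your power and power--log examples are the correct instruments only for Theorem~\ref{inclusiones2}, where $p<u$ and the critical exponents $\alpha+1/p$ and $\beta+1/u$ genuinely govern the inclusion --- which is exactly where the paper deploys them.
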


\begin{thm}\label{inclusiones2}
If $p<u,$ then
$$H(p,q,\alpha)\subseteq H(u,v,\beta)\Leftrightarrow
\begin{cases}
\alpha+\frac{1}{p}<\beta+\frac{1}{u}\text{\quad or}\\
\alpha+\frac{1}{p}=\beta+\frac{1}{u}\text{\quad and\quad}q\leq v.
\end{cases}
$$
\end{thm}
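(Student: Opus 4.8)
The plan is to prove both directions by reducing to the growth and example machinery already assembled. For the forward implication (the inclusion holds), I would first observe that when $p<u$ the spaces are related through pointwise rather than mean control, so the natural comparison is via the exponent $\alpha+1/p$ versus $\beta+1/u$. The key step is to estimate $M_u(r,f)$ for $f\in H(p,q,\alpha)$. I would split the circle integral and use the pointwise bound from \eqref{puntual}, namely $|f(w)|\leq m\|f\|_{p,q,\alpha}(1-|w|)^{-(\alpha+1/p)}$, on part of the mass while keeping an $M_p$ factor on the rest. More precisely, writing $u=p+(u-p)$ I would bound
\begin{equation*}
M_u^u(r,f)=\int_0^{2\pi}|f(re^{i\theta})|^{u-p}|f(re^{i\theta})|^p\,\frac{d\theta}{2\pi}\leq\bigl(m\|f\|_{p,q,\alpha}\bigr)^{u-p}(1-r)^{-(\alpha+1/p)(u-p)}M_p^p(r,f),
\end{equation*}
which after raising to the power $1/u$ gives $M_u(r,f)\lesssim (1-r)^{-(\alpha+1/p)(u-p)/u}M_p^{p/u}(r,f)$. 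This converts an $H(p,q,\alpha)$ mean into an $H(u,\cdot,\cdot)$ mean at the cost of an explicit power of $(1-r)$.

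Feeding this estimate into the definition of $\|f\|_{u,v,\beta}$, I would track the resulting exponent of $(1-r)$ and the exponent $q$ attached to $M_p$. The arithmetic should show that the new radial weight carries exponent $\beta+1/u-(\alpha+1/p)$ relative to the old one, so that the condition $\alpha+1/p\leq\beta+1/u$ is exactly what makes the weight integrable (strictly, when $\beta>\ $ the corresponding value, and with the $l^q\subseteq l^v$ type comparison governing the boundary case $q\leq v$). When the exponents are equal I expect the boundary case to reduce, via \eqref{inftynorm} and a comparison of the two weighted $L^q$ versus $L^v$ norms over $(0,1)$, to the monotonicity $q\leq v$; here the already-proved inclusion $H(p,q,\alpha)\subseteq H(p,\infty,\alpha)$ and Lemma~\ref{mediaintegral} handle the endpoint $v=\infty$. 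The constant produced this way also yields the desired bound on the norm of the inclusion operator.

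For the reverse implication (no inclusion), I would exhibit explicit counterexamples drawn from Lemmas~\ref{fg} and \ref{fc}. When $\alpha+1/p>\beta+1/u$, the function $f(z)=(1-z)^{-\gamma}$ with $\gamma$ chosen in the gap $\beta+1/u\leq\gamma<\alpha+1/p$ lies in $H(p,q,\alpha)$ but not in $H(u,v,\beta)$ by Lemma~\ref{fg}. When $\alpha+1/p=\beta+1/u$ but $q>v$, the critical-exponent logarithmic examples of Lemma~\ref{fc} with a carefully chosen power $c$ satisfying $1/q<c\leq 1/v$ separate the two spaces. I would verify in each case that the membership criteria of the two lemmas are simultaneously satisfied on one side and violated on the other.

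\textbf{Main obstacle.} The hard part will be the forward direction: ensuring that the crude pointwise substitution does not lose the exact boundary behavior. The pointwise bound \eqref{puntual} is sharp only in a big-Oh sense, so plugging it in risks weakening a strict exponent inequality into a non-strict one or mishandling the equality case $\alpha+1/p=\beta+1/u$. I expect to need to be careful that the power $M_p^{p/u}$ surviving the estimate still carries enough of the original integrability to make the weighted integral converge precisely under the stated hypotheses, and that the exponent bookkeeping with $q$ versus $v$ in the equality case is done with the correct Hölder/inclusion argument on $(0,1)$ rather than by a lossy pointwise bound; handling $v=\infty$ and $u=\infty$ separately via \eqref{infty} will also require attention.
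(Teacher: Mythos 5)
Your necessity argument and the strict-inequality half of sufficiency coincide with the paper's own proof: the interpolation $M_u(r,f)\leq M_\infty^{1-p/u}(r,f)\,M_p^{p/u}(r,f)$ combined with the pointwise bound (\ref{puntual}) is exactly how the paper proves its Lemma~\ref{medias}, and substituting $M_p(r,f)\leq\|f\|_{p,q,\alpha}(1-r)^{-\alpha}$ into your estimate recovers the paper's computation for the case $\alpha+\frac{1}{p}<\beta+\frac{1}{u}$. The counterexamples from Lemmas~\ref{fg} and \ref{fc} are also the paper's; one caveat on your choice of $\gamma$: you must take $\beta+\frac{1}{u}<\gamma<\alpha+\frac{1}{p}$ with \emph{strict} inequality on the left, since for $v=\infty$ Lemma~\ref{fg}'s criterion is the non-strict $\gamma\leq\beta+\frac{1}{u}$, so the left endpoint of your proposed gap actually lies \emph{inside} $H(u,\infty,\beta)$. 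Your promised verification of the membership criteria should catch this, and your flexible choice of $\gamma$ is then in fact more robust than the paper's fixed $\gamma=\beta+\frac{1}{u}$.

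The genuine gap is the equality case $\alpha+\frac{1}{p}=\beta+\frac{1}{u}$ with $v<\infty$, and it is precisely the obstacle you flagged, but no care in the bookkeeping rescues your estimate: it is quantitatively too weak there. Writing $\lambda=p/u<1$, your bound $M_u(r,f)\leq C(1-r)^{-(\alpha+\frac{1}{p})(1-\lambda)}M_p^{\lambda}(r,f)$ yields
\begin{align*}
\|f\|_{u,v,\beta}^v&\leq C'\int_0^1(1-r)^{\beta v-1-(\alpha+\frac{1}{p})(1-\lambda)v}\,M_p^{\lambda v}(r,f)\,dr\\&=C'\int_0^1(1-r)^{\alpha\lambda v-1}\,M_p^{\lambda v}(r,f)\,dr=\frac{C'}{\alpha\lambda v}\,\|f\|_{p,\lambda v,\alpha}^{\lambda v},
\end{align*}
so what you need is $f\in H(p,\lambda v,\alpha)$, and by the diagonal case ($p=u$, $\alpha=\beta$) of Theorem~\ref{inclusiones1} this holds for every $f\in H(p,q,\alpha)$ only when $q\leq\lambda v=pv/u$, strictly stronger than the hypothesis $q\leq v$. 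Once $M_\infty$ has been bounded pointwise, only the fraction $p/u$ of the mean survives, and the weighted $L^q$-versus-$L^v$ comparison you gesture at (i.e.\ Lemma~\ref{acotacionmedia}) cannot recover the lost power. The paper bridges exactly this subcase with an ingredient absent from your plan: Lemma~\ref{medias2} (Lemma 5 of \cite{ArSha}), the same-radius mean comparison $M_u(r,f)\leq C(1-r)^{\frac{1}{u}-\frac{1}{p}}M_p(r,f)$, which retains the \emph{first} power of $M_p$ and, combined with Lemma~\ref{acotacionmedia}, gives $\|f\|_{u,v,\beta}^v\leq\frac{\beta v}{\alpha q}C^v\|f\|_{p,q,\alpha}^v$ for all $q\leq v$. (The equality case with $v=\infty$ is harmless for you: your estimate plus (\ref{alpha}) gives $(1-r)^{\beta}M_u(r,f)\leq C\|f\|_{p,q,\alpha}$ directly, since the exponents sum to exactly $-\beta$.) So your proof becomes complete only after importing Lemma~\ref{medias2} or proving an equivalent full-power mean-decrease estimate, which does not follow from the pointwise bound.
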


It is worth noticing that we need $\alpha$ to be greater than zero as we stated when these spaces were defined. In the limit case $\alpha=0,$ by a theorem by Hardy and Littlewood (related to the Isoperimetric Inequality, see \cite{matiso}, \cite{isop}), we have $H^p\subseteq A^{2p}.$ That is, $H(p,\infty,0)\subseteq H(2p,2p,1/2p),$ although these parameters do not satisfy Theorem 2. 

Notice also that it is only to be expected that the relation between the spaces would depend on the relation between the parameters $p$ and $u,$ since, ultimately, in order to compare the different spaces we need to compare the sizes of the integral means. In turn, the integral means relate in a different fashion according to the parameters $p$ and $u.$

Therefore, in order to prove these theorems we will need the following estimates of the integral means, which can be found in the literature (see \cite[Thm. 5.9]{Dur}, \cite{HL}).

\begin{lemmax}\label{acotacionmedia}
If $f\in H(p,q,\alpha)$ and $q\leq v<\infty,$ then $$M_p^v(r,f)\leq \|f\|^{v-q}_{p,q,\alpha}\,(1-r)^{-\alpha(v-q)}M_p^q(r,f).$$ 
\end{lemmax}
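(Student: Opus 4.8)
The plan is to establish the inequality
$$M_p^v(r,f)\leq \|f\|^{v-q}_{p,q,\alpha}\,(1-r)^{-\alpha(v-q)}M_p^q(r,f)$$
by factoring the left-hand side as $M_p^v(r,f)=M_p^{v-q}(r,f)\cdot M_p^q(r,f)$ and bounding the first factor. The key observation is that the exponent $v-q$ is nonnegative (since $q\leq v$), so I may apply the pointwise mean estimate from the earlier part of the paper directly to $M_p^{v-q}(r,f)$ without reversing any inequality.

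First I would recall the mean growth bound already proved: inequality (\ref{alpha}) states that for $f\in H(p,q,\alpha)$ one has $M_p(r,f)\leq \|f\|_{p,q,\alpha}\,(1-r)^{-\alpha}$. Raising both sides to the power $v-q\geq 0$ gives
$$M_p^{v-q}(r,f)\leq \|f\|_{p,q,\alpha}^{v-q}\,(1-r)^{-\alpha(v-q)}.$$
Then I would multiply this through by the remaining factor $M_p^q(r,f)$, which is legitimate because it is a nonnegative quantity, to obtain
$$M_p^v(r,f)=M_p^{v-q}(r,f)\,M_p^q(r,f)\leq \|f\|_{p,q,\alpha}^{v-q}\,(1-r)^{-\alpha(v-q)}\,M_p^q(r,f),$$
which is exactly the claimed estimate.

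There is essentially no serious obstacle here: the lemma is a short corollary of the mean growth inequality (\ref{alpha}), and the only points requiring care are that $v-q\geq 0$ (so that exponentiating preserves the direction of the inequality) and that the case $q=v$ is trivial, reducing the statement to the identity $M_p^q(r,f)\leq M_p^q(r,f)$. I would also remark that, although the hypothesis as written assumes $v<\infty$, the inequality is used precisely to interpolate between the $H(p,q,\alpha)$ norm and a higher exponent $v$, so I would present the bound exactly in the form above, leaving the constant implicit in $\|f\|_{p,q,\alpha}$. If anything, the only decision is whether to invoke (\ref{alpha}) or, equivalently, the definition $\|f\|_{p,\infty,\alpha}\leq\|f\|_{p,q,\alpha}$ together with (\ref{infty}); either route yields the same factor, so I would cite (\ref{alpha}) for directness.
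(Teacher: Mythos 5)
Your proposal is correct and coincides with the paper's own proof: both factor $M_p^v(r,f)=M_p^{v-q}(r,f)\,M_p^q(r,f)$ and bound the first factor by raising the mean growth estimate (\ref{alpha}) to the nonnegative power $v-q$. Your remarks on $v-q\geq 0$ and the trivial case $q=v$ are sound but add nothing beyond what the paper's one-line argument already contains.
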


\begin{proof} 
If $f\in H(p,q,\alpha),$ by the bound on the integral mean (\ref{alpha})  $$M_p(r,f)\leq \|f\|_{p,q,\alpha}\,(1-r)^{-\alpha},$$ and since $q\leq v<\infty,$ 
$$M_p^v(r,f)=M_p^{v-q}(r,f)M_p^q(r,f)\leq \|f\|^{v-q}_{p,q,\alpha}\,(1-r)^{-\alpha(v-q)}M_p^q(r,f).$$
\end{proof}

If $f$ belongs to $H(p,q,\alpha)$ and $u>p$ we have the following bound for $M_u(r,f).$

\begin{lemmax}\label{medias}
If $f\in H(p,q,\alpha)$ and $p<u,$ then $$M_u(r,f)\leq m^{1-\frac{p}{u}}\,\|f\|_{p,q,\alpha}\,(1-r)^{-\alpha+\frac{1}{u}-\frac{1}{p}},$$ where $$m=\frac{2^{1/p}}{(\alpha q\,B(\alpha q,q/p+1))^{1/q}}.$$
\end{lemmax}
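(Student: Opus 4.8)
The plan is to combine the two growth estimates already established: the pointwise bound (\ref{puntual}), which controls $\max_\theta|f(re^{i\theta})|$, and the mean bound (\ref{alpha}), which controls $M_p(r,f)$. The governing idea is the standard interpolation between the $p$-mean and the sup: since $p<u$, one splits $|f|^u=|f|^{u-p}\,|f|^p$, pulls the factor $|f|^{u-p}$ out of the integral as a pointwise maximum (where the point-evaluation bound applies), and keeps $|f|^p$ under the integral to recover $M_p^p$. Note that $p<u\le\infty$ forces $p<\infty$, so the value of $m$ in (\ref{m}) that applies is exactly the $p<\infty$ one quoted in the statement. When $u=\infty$ there is nothing to interpolate: $M_\infty(r,f)=\max_\theta|f(re^{i\theta})|$ and the claimed bound reduces verbatim to (\ref{puntual}) (with $1/u=0$ and $m^{1-p/u}=m$), so I would dispose of that case in one line and assume $u<\infty$ thereafter.

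For $p<u<\infty$ the first step is to write
$$M_u^u(r,f)=\int_0^{2\pi}|f(re^{i\theta})|^{u-p}\,|f(re^{i\theta})|^p\,\frac{d\theta}{2\pi}\le\Big(\max_{0\le\theta<2\pi}|f(re^{i\theta})|\Big)^{u-p}M_p^p(r,f).$$
Next I would invoke (\ref{puntual}), which holds at every point of modulus $r$, to bound the maximum by $m\,\|f\|_{p,q,\alpha}\,(1-r)^{-(\alpha+1/p)}$, and invoke (\ref{alpha}) to bound $M_p^p(r,f)$ by $\|f\|_{p,q,\alpha}^p\,(1-r)^{-\alpha p}$. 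Substituting both gives
$$M_u^u(r,f)\le m^{u-p}\,\|f\|_{p,q,\alpha}^u\,(1-r)^{-(\alpha+1/p)(u-p)-\alpha p}.$$

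The remaining step is pure bookkeeping of the exponents. The power of $\|f\|_{p,q,\alpha}$ collects to $(u-p)+p=u$, and the exponent of $(1-r)$ simplifies as $-(\alpha+1/p)(u-p)-\alpha p=1-\tfrac{u}{p}-\alpha u$. Taking the $u$-th root then yields the constant $m^{(u-p)/u}=m^{1-p/u}$ and the exponent $\tfrac1u-\tfrac1p-\alpha$, which is precisely the asserted bound. I do not anticipate a genuine obstacle here; the only points requiring care are the exponent arithmetic in the last display and the observation that the constant $m$ in the statement is the correct ($p<\infty$) branch of (\ref{m}), together with the clean treatment of the $u=\infty$ endpoint as a direct consequence of (\ref{puntual}).
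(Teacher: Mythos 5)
Your proof is correct and follows essentially the same route as the paper: the paper likewise disposes of $u=\infty$ as the pointwise estimate (\ref{puntual}) itself, and for $u<\infty$ splits $|f|^u=|f|^{u-p}|f|^p$ to get $M_u(r,f)\leq M_\infty^{1-\frac{p}{u}}(r,f)\,M_p^{\frac{p}{u}}(r,f)$, then applies (\ref{puntual}) and (\ref{alpha}) exactly as you do. Your exponent bookkeeping, including $m^{1-p/u}$ and the observation that $p<u$ forces the $p<\infty$ branch of (\ref{m}), matches the paper's computation.
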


\begin{proof}
The pointwise inequality (\ref{puntual}) $$M_\infty(r,f)\leq m\,\|f\|_{p,q,\alpha}\,(1-r)^{-\alpha-\frac{1}{p}}$$ is the case $u=\infty.$
Now if $u<\infty,$ \begin{align}\label{mu}M_u(r,f)&=\left(\int_0^{2\pi}|f(re^{i\theta})|^{u-p}|f(re^{i\theta})|^{p}\,\frac{d\theta}{2\pi}\right)^{1/u}\leq M_\infty^{1-\frac{p}{u}}(r,f)\,M_p^{\frac{p}{u}}(r,f)\\&\leq  m^{1-\frac{p}{u}}\,\|f\|^{1-\frac{p}{u}}_{p,q,\alpha}\,(1-r)^{(1-\frac{p}{u})(-\alpha-\frac{1}{p})}\|f\|^{\frac{p}{u}}_{p,q,\alpha}\,(1-r)^{-\alpha\frac{p}{u}}\notag\\&= m^{1-\frac{p}{u}}\,\|f\|_{p,q,\alpha}\,(1-r)^{-\alpha+\frac{1}{u}-\frac{1}{p}}.\notag
\end{align}
\end{proof}

The growth property for the integral means is well known: for an analytic function $f$ on the disk, $M_p(r,f)\leq M_u(r,f)$ when $p\leq u.$ Furthermore, if $f$ belongs to $H(p,q,\alpha),$ we also have the following property that quantifies how the integral means decrease. 

\begin{lemmax}[Lemma 5, \cite{ArSha}]\label{medias2}
If $0<p\leq u\leq\infty,$ then $$M_u(r,f)\leq C(1-r)^{\frac{1}{u}-\frac{1}{p}} M_p(r,f).$$
\end{lemmax}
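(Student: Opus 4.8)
The plan is to split off the endpoint $u=\infty$ and recover the general exponent by an elementary interpolation, since $\tfrac1u-\tfrac1p$ factors exactly through that endpoint. First I would record the pointwise Hölder inequality $M_u(r,f)\le M_\infty^{1-p/u}(r,f)\,M_p^{p/u}(r,f)$, obtained by writing $|f|^{u}=|f|^{u-p}|f|^{p}$ under the integral sign precisely as in \eqref{mu}. Granting the scalar endpoint bound $M_\infty(r,f)\le C(1-r)^{-1/p}M_p(r,f)$, substitution gives $M_u(r,f)\le C^{1-p/u}(1-r)^{(1-p/u)(-1/p)}M_p(r,f)=C^{1-p/u}(1-r)^{1/u-1/p}M_p(r,f)$, which is the assertion. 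So the whole matter reduces to the case $u=\infty$.

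For the endpoint I would use the subharmonicity of $|f|^{p}$. Fix $z_0=re^{i\theta}$ and set $\delta=\tfrac12(1-r)$, so that $D(z_0,\delta)$ sits inside $\{|w|\le\tfrac{1+r}{2}\}$; the sub-mean-value inequality gives $|f(z_0)|^{p}\le\frac{1}{\pi\delta^{2}}\int_{D(z_0,\delta)}|f|^{p}\,dA$. Since $D(z_0,\delta)$ lies in the thin annulus $\{r-\delta\le|w|\le r+\delta\}$ of width $2\delta$, passing to polar coordinates bounds the right-hand side by a constant times $\delta^{-2}\int_{r-\delta}^{r+\delta}M_p^{p}(s,f)\,s\,ds$. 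Because the annulus (unlike the full disk) has thickness comparable to $\delta$, this produces a single factor $\delta^{-1}\approx(1-r)^{-1}$, hence the correct power $(1-r)^{-1/p}$ after extracting $p$-th roots, rather than the Bergman power $(1-r)^{-2/p}$. Equivalently, I could quote the Poisson-kernel estimate \eqref{acot} already proved in the paper, namely $M_\infty(r,f)\le 2^{1/p}(\rho-r)^{-1/p}M_p(\rho,f)$, with the choice $\rho=\tfrac{1+r}{2}$.

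The one genuine difficulty, and where I expect the real content to sit, is the final passage from the means seen near the outer radius $\rho=\tfrac{1+r}{2}$ (equivalently, over the annulus reaching $r+\delta$) back to $M_p(r,f)$ at the radius $r$ itself: both the subharmonic estimate and \eqref{acot} inescapably read $|f|$ slightly \emph{outside} the circle $|w|=r$. What one needs is the uniform comparison $M_p(\rho,f)\le C\,M_p(r,f)$ whenever $1-\rho\approx 1-r$. Monotonicity of $s\mapsto M_p(s,f)$ (Hardy's convexity theorem) only controls this ratio in the opposite direction, so this comparison is the crux and is not for free; it is governed by how fast the means are allowed to grow between comparable radii. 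I would therefore derive it from the logarithmic convexity of $r\mapsto\log M_p(r,f)$ together with the decay of the means along the radius, which is exactly the mechanism that makes $(1-r)^{1/u-1/p}$ the sharp exponent — the extremal functions $(1-z)^{-\gamma}$, for which $M_\infty(r,f)/M_p(r,f)\approx(1-r)^{-1/p}$, show that no smaller power is possible.
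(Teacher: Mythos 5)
Your first two steps are correct and in fact coincide with tools the paper already records: the factorization $M_u(r,f)\le M_\infty^{1-p/u}(r,f)\,M_p^{p/u}(r,f)$ is exactly the manipulation in \eqref{mu}, and your subharmonicity argument (equivalently, \eqref{acot} with $\rho=\frac{1+r}{2}$) correctly yields the endpoint bound \emph{in two-radius form}, $M_\infty(r,f)\le C(1-r)^{-1/p}M_p\bigl(\tfrac{1+r}{2},f\bigr)$. Note that the paper offers no internal proof of this lemma --- it is quoted from \cite{ArSha} --- so your attempt has to stand on its own.

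It does not, and the failure is precisely at the step you flag as the crux: the comparison $M_p(\rho,f)\le C\,M_p(r,f)$ for $1-\rho\approx 1-r$ is false for general analytic $f$, and so is the same-radius inequality itself, so no argument can close this gap. Logarithmic convexity runs the wrong way: it bounds $M_p(\rho)$ above only through $M_p(r)$ \emph{and} $M_p(\rho')$ at a larger radius $\rho'$, so it pushes the problem outward, and even assuming the cap $M_p(r,f)\le \|f\|(1-r)^{-\alpha}$ it leaves an unbounded extra factor whenever $M_p(r,f)$ sits far below that cap. Concretely, take $f(z)=\exp\frac{1+z}{1-z}$, so that $|f(re^{i\theta})|=e^{P_r(\theta)}$ with $P_r$ the Poisson kernel. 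Then $M_\infty(r,f)=e^{(1+r)/(1-r)}$ exactly, while the Laplace method (the peak of $P_r$ has height $\frac{1+r}{1-r}$ and effective width $(1-r)^{3/2}$, since $P_r''(0)\approx -4(1-r)^{-3}$) gives $M_p^p(r,f)\approx C_p\,(1-r)^{3/2}e^{p(1+r)/(1-r)}$. Hence $M_\infty(r,f)\approx (1-r)^{-3/(2p)}M_p(r,f)$, incompatible with the claimed $M_\infty(r,f)\le C(1-r)^{-1/p}M_p(r,f)$ because $\frac{3}{2p}>\frac1p$; the same function has $M_p\bigl(\tfrac{1+r}{2},f\bigr)/M_p(r,f)\approx e^{2/(1-r)}$ up to powers of $1-r$, killing any radius-transfer scheme. (Your closing remark about $(1-z)^{-\gamma}$ only shows the power is attained, not that it is an upper bound at equal radii.) The honest conclusion is that what your argument genuinely proves --- and the form in which the citation to \cite{ArSha} should be understood, and all the paper actually needs --- is the two-radius estimate $M_u(r,f)\le C(1-r)^{\frac1u-\frac1p}M_p\bigl(\tfrac{1+r}{2},f\bigr)$: in the proof of Theorem~\ref{inclusiones2} this version suffices after the substitution $s=\frac{1+r}{2}$, since $1-r=2(1-s)$ changes the weighted integrals only by constants, and in the $v=\infty$ case one simply takes the supremum in $s$.
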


Now we can prove the theorems.

\begin{proof}[Proof of Theorem~\ref{inclusiones1}]
Throughout this proof, we will assume that $p\geq u.$ The key to proving the sufficiency is the inequality of the integral means: if $p\geq u,$ then $M_u(r,f)\leq M_p(r,f)$. 

We suppose first that $\alpha<\beta.$ Then, since $M_p(r,f)\leq \|f\|_{p,q,\alpha}(1-r)^{-\alpha}$ by (\ref{alpha}), we have that, if $v$ is finite,
\begin{align*}\|f\|_{u,v,\beta}^v&=\beta v\int_0^1(1-r)^{\beta v-1}M_u^v(r,f)\,dr\leq \beta v\int_0^1(1-r)^{\beta v-1}M_p^v(r,f)\,dr\\&\leq \beta v\,\|f\|^v_{p,q,\alpha}\int_0^1(1-r)^{\beta v-1}(1-r)^{-\alpha v}\,dr\\&=\beta v\,\|f\|^v_{p,q,\alpha}\int_0^1(1-r)^{v(\beta-\alpha)-1}\,dr=\frac{\beta}{\beta-\alpha}\|f\|^v_{p,q,\alpha}, 
\end{align*} 
and, by (\ref{inftynorm}), 
\begin{align*}\|f\|_{u,\infty,\beta}&=\sup_{0\leq r<1}(1-r)^\beta M_u(r,f)\leq\sup_{0\leq r<1}(1-r)^\alpha M_p(r,f)=\|f\|_{p,\infty,\alpha}\leq\|f\|_{p,q,\alpha}.\end{align*}
Therefore, $f\in H(u,v,\beta)$ for every $f\in H(p,q,\alpha).$

Now, if $\alpha=\beta$ and $q\leq v,$ by Lemma~\ref{acotacionmedia}, 
\begin{align*}\|f\|_{u,v,\beta}^v&=\beta v\int_0^1(1-r)^{\beta v-1}M_u^v(r,f)\,dr\leq \beta v\int_0^1(1-r)^{\beta v-1}M_p^v(r,f)\,dr\\&\leq \beta v\,\|f\|^{v-q}_{p,q,\alpha}\,\int_0^1(1-r)^{\beta v-1}(1-r)^{-\alpha(v-q)}M_p^q(r,f)\,dr\\&=\beta v\,\|f\|^{v-q}_{p,q,\alpha}\,\int_0^1(1-r)^{\alpha q-1}M_p^q(r,f)\,dr\\&=\frac{\beta v}{\alpha q}\,\|f\|^{v-q}_{p,q,\alpha}\,\|f\|_{p,q,\alpha}^q=\frac{v}{q}\,\|f\|^v_{p,q,\alpha}
\end{align*}
if $v<\infty,$ and, again by (\ref{inftynorm}), 
\begin{align*}\|f\|_{u,\infty,\beta}&=\sup_{0\leq r<1}(1-r)^\beta M_u(r,f)\leq\sup_{0\leq r<1}(1-r)^\alpha M_p(r,f)=\|f\|_{p,\infty,\alpha}\leq\|f\|_{p,q,\alpha}.\end{align*}

Hence, in both cases $H(p,q,\alpha)\subseteq H(u,v,\beta),$ and the sufficiency is proven. 

For the necessity, we need to see that $H(p,q,\alpha)\not\subseteq H(u,v,\beta)$ when the parameters do not relate as in the statement of the theorem. 
For this, consider a function of type $f(z)=\sum_{n=1}^\infty a_n\,z^{2^{n-1}}$ as in Lemma~\ref{lag}. Recall that $f$ belongs to $H(p,q,\alpha)$ if and only if $\left\{2^{-\alpha n}a_n\right\}\in l^q.$

If $\alpha>\beta,$ let $f(z)=\sum_{n=1}^\infty 2^{n\beta}\,z^{2^{n-1}}.$ Since $$\left\{\frac{a_n}{2^{\alpha n}}\right\}=\left\{\frac{1}{2^{n(\alpha-\beta)}}\right\}\in l^q,$$ the function $f$ belongs to $H(p,q,\alpha),$ but $$\left\{\frac{a_n}{2^{\beta n}}\right\}=\left\{1\right\}\not\in l^v,$$ so this function does not belong to $H(u,v,\beta),$ and therefore $H(p,q,\alpha)\not\subseteq H(u,v,\beta)$ if $\alpha>\beta.$

If $\alpha=\beta$ and $q>v,$ we take $f(z)=\sum_{n=1}^\infty 2^{n\alpha}\,n^{-1/v}\,z^{2^{n-1}}.$ Similarly, $$\left\{\frac{a_n}{2^{\alpha n}}\right\}=\left\{\frac{2^{n\alpha}\,n^{-1/v}}{2^{n\alpha}}\right\}=\left\{\frac{1}{n^{1/v}}\right\}\in l^q,$$ and $f\in H(p,q,\alpha),$ but $$\left\{\frac{a_n}{2^{\beta n}}\right\}=\left\{\frac{2^{n\alpha}\,n^{-1/v}}{2^{\beta n}}\right\}=\left\{\frac{1}{n^{1/v}}\right\}\not\in l^v,$$ so it does not belong to $H(u,v,\beta),$ and hence $H(p,q,\alpha)\not\subseteq H(u,v,\beta)$ if $\alpha=\beta$ and $q>v.$

\end{proof}

\begin{proof}[Proof of Theorem~\ref{inclusiones2}]

As in the last proof, from now on we will assume $p<u.$ Firstly we shall see that if $f\in H(p,q,\alpha)$ and the parameters are ordered as in the statement, then $f\in H(u,v,\beta).$ 

If $\alpha<\beta+\frac{1}{u}-\frac{1}{p},$ by Lemma~\ref{medias},
\begin{align*}\|f\|_{u,v,\beta}^v&=\beta v\int_0^1(1-r)^{\beta v-1}M_u^v(r,f)\,dr\\&\leq\beta v\, m^{v(1-\frac{p}{u})}\,\|f\|^{v}_{p,q,\alpha}\,\int_0^1(1-r)^{\beta v-1}(1-r)^{v(-\alpha+\frac{1}{u}-\frac{1}{p})}\,dr\\&=\beta v\, m^{v(1-\frac{p}{u})}\,\|f\|^{v}_{p,q,\alpha}\,\int_0^1(1-r)^{v(\beta-\alpha+\frac{1}{u}-\frac{1}{p}) -1}\,dr=\frac{\beta\, m^{v(1-\frac{p}{u})}}{\beta-\alpha+\frac{1}{u}-\frac{1}{p}}\|f\|^{v}_{p,q,\alpha}
\end{align*}
for $v<\infty,$ and \begin{align*}\|f\|_{u,\infty,\beta}&=\sup_{0\leq r<1}(1-r)^\beta M_u(r,f)\\&\leq  m^{1-\frac{p}{u}}\,\|f\|_{p,q,\alpha}\sup_{0\leq r<1}(1-r)^\beta(1-r)^{-\alpha+\frac{1}{u}-\frac{1}{p}}=m^{1-\frac{p}{u}}\,\|f\|_{p,q,\alpha}.\end{align*}

If $\alpha=\beta+\frac{1}{u}-\frac{1}{p}$ and $q\leq v,$ by Lemmas \ref{medias2} and \ref{acotacionmedia},
\begin{align*}\|f\|_{u,v,\beta}^v&=\beta v\int_0^1(1-r)^{\beta v-1}M_u^v(r,f)\,dr\\&\leq\beta v\, C^v\int_{0}^1(1-r)^{\beta v-1}(1-r)^{v(\frac{1}{u}-\frac{1}{p})} M_p^v(r,f)\,dr\\&=\beta v\,C^v\int_{r}^1(1-r)^{\alpha v-1} M_p^v(r,f)\,dr\\&\leq\beta v\,C^v\,\|f\|^{v-q}_{p,q,\alpha}\int_{0}^1(1-r)^{\alpha v-1}(1-r)^{-\alpha(v-q)} M_p^q(r,f)\,dr\\&=\frac{\beta v}{\alpha q}\,C^v\,\|f\|^{v}_{p,q,\alpha}.
\end{align*}

If $v=\infty,$ in a similar way, 
\begin{align*}\|f\|_{u,\infty,\beta}&=\sup_{0\leq r<1}(1-r)^\beta M_u(r,f)\leq C\sup_{0\leq r<1}(1-r)^\beta (1-r)^{\frac{1}{u}-\frac{1}{p}}M_p(r,f)\\&=C\sup_{0\leq r<1}(1-r)^\alpha M_p(r,f)=C\,\|f\|_{p,\infty,\alpha}\leq C\,\|f\|_{p,q,\alpha}.\end{align*}

Finally, we need to see that $H(p,q,\alpha)\not\subseteq H(u,v,\beta)$ when the parameters are not as in the assumptions of the statement. 
If $\alpha+\frac{1}{p}>\beta+\frac{1}{u},$ Lemma~\ref{fg} tells us that $$f(z)=\frac{1}{(1-z)^{\beta+1/u}}$$ belongs to $H(p,q,\alpha)$ but not $H(u,v,\beta),$ and this proves that $H(p,q,\alpha)\not\subseteq H(u,v,\beta)$ when $\alpha+\frac{1}{p}>\beta+\frac{1}{u}.$

If $\alpha+\frac{1}{p}=\beta+\frac{1}{u}$ and $q>v,$ by Lemma~\ref{fc} the function $$f(z)=\frac{1}{(1-z)^{\alpha+1/p}}\left(\log\frac{e}{1-z}\right)^{-1/v}$$ is an example of a function in $H(p,q,\alpha)$ which is not in $H(u,v,\beta),$ and hence $H(p,q,\alpha)\not\subseteq H(u,v,\beta)$ for $\alpha+\frac{1}{p}=\beta+\frac{1}{u}$ and $q>v.$

\end{proof}



\begin{thebibliography}{20}
\bibitem{aj} P. Ahern and M. Jevti\'c, \rm{Duality and multipliers for mixed norm spaces}, \textit{Michigan Math. J. } \textbf{ 30} (1983), 53--64. 
\bibitem{ArSha} M. Arsenovi\'c and R. Shamoyan, \rm{On some new theorems on multipliers in harmonic function spaces in higher dimension II}, \textit{Bull. Korean Math. Soc. } \textbf{ 50} (2013), 1451--1469.
\bibitem{av} K. L. Avetisyan, \rm{A note on mixed norm spaces of analytic functions}, \textit{Aust. J. Math. Anal. Appl.} \textbf{ 9} (2012), 1-6.
\bibitem{avetisyan} K. L. Avetisyan, \rm{Sharp inclusions and lacunary series in mixed-norm spaces on the polydisc}, \textit{Complex Var. Elliptic Equ.} \textbf{ 58} (2013), 185-195.
\bibitem{ben} A. Benedek and R. Panzone, \rm{The space $L^p$, with mixed norm}, \textit{Duke Math. J.} \textbf{ 28} (1961), 301-324.
\bibitem{Blasco} O. Blasco, \rm{Multipliers on spaces of analytic functions}, \textit{Canad. J. Math.}\textbf{ 47} (1995), 44-64.
\bibitem{BKV} S. M. Buckley, P. Koskela and D. Vukoti\'c, \rm{Fractional integration, differentiation, and weighted Bergman spaces}, \textit{Math. Proc. Cambridge Philos. Soc.}\textbf{ 126} (1999), 369-385.
\bibitem{Dur} P. L. Duren, \textit{Theory of $H^p$ Spaces}, Dover Publications, Mineola, NY 2000.
\bibitem{DurSchu} P. L. Duren and A. Schuster, \textit{Bergman Spaces}, American Mathematical Society, Providence, RI 2004.
\bibitem{Flett} T. M. Flett, \rm{The dual of an inequality of Hardy and Littlewood and some related inequalities}, \textit{J. Math. Anal. Appl.} \textbf{ 38} (1972), 746-765.
\bibitem{Flett2} T. M. Flett, \rm{Lipschitz spaces of functions on the circle and the disk}, \textit{J. Math. Anal. Appl.} \textbf{ 39} (1972), 125--158.
\bibitem{Gab} S. Gadbois, \rm{Mixed-norm generalizations of Bergman spaces and duality}, \textit{Proc. Amer. Math. Soc.}\textbf{ 104} (1988), 1171-1180.
\bibitem{HL} G. H. Hardy and J. E. Littlewood, \rm{Some properties of fractional integrals. II.}, \textit{Math. Z.}\textbf{ 34} (1932), 403-439.


\bibitem{JVA} M. Jevti\'c, D. Vukoti\'c and M. Arsenovi\'c, \textit{Taylor Coefficients and Coefficient Multipliers of Hardy and Bergman-type Spaces}, to appear.


\bibitem{matiso} M. Mateljevi\'c, \rm{The isoperimetric inequality and some extremal problems in $H^{1}$}, \textit{Lecture Notes in Math.}\textbf{ 798} (1980), 364-369.
\bibitem{MP} M. Mateljevi\'c and M. Pavlovi\'c, \rm{$L^p$-behavior of power series with positive coefficients and Hardy spaces}, \textit{Proc. Amer. Math. Soc.}\textbf{ 87} (1983), 309-316.
\bibitem{Sledd} W. T. Sledd, \rm{Some results about spaces of analytic functions introduced by Hardy and Littlewood}, \textit{J. London Math. Soc.}\textbf{ 9} (1974/75), 328--336.
\bibitem{isop}D. Vukoti{\'c}, \rm{The isoperimetric inequality and a theorem of {H}ardy and
              {L}ittlewood}, \textit{Amer. Math. Monthly}\textbf{ 110} (2003), 532-536.
\end{thebibliography}

\bibliographystyle{amsplain}

\end{document}